\newif\ifArXiV

\ArXiVtrue 

\ifArXiV
\documentclass{article}
\else
\documentclass[authoryear,preprint]{elsarticle}
\fi

\usepackage[utf8x]{inputenc}
\usepackage[T1]{fontenc}
\usepackage{fullpage}
\usepackage[english]{babel}
\usepackage{amsmath,amssymb,amsthm,graphicx}
\usepackage{multirow}
\usepackage{hyperref}
\usepackage{subcaption}
\usepackage{booktabs}
\usepackage{xspace}
\usepackage{graphicx}

\newtheorem{theorem}{Theorem}
\newtheorem{definition}[theorem]{Definition}

\newcommand{\MCLP}{MCLP\xspace}
\newcommand{\MCLPCP}{MCLPCP\xspace}
\newcommand{\BMCLP}{BMCLP\xspace}
\newcommand{\BMCLPCP}{BMCLPCP\xspace}

\ifArXiV
\usepackage[affil-it]{authblk}
\usepackage[authoryear]{natbib}
\newenvironment{frontmatter}{}{}
\newenvironment{keyword}{\small \textbf{Keywords:}}{}
\let\address\affil
\fi

\begin{document}
\begin{frontmatter}
\title{A note on the maximal covering location problem with customer 
preference ordering}

\ifArXiV
\author[1]{Elisabeth Gaar\thanks{elisabeth.gaar@uni-a.de}}
\author[2]{Markus Sinnl\thanks{markus.sinnl@jku.at}}
\affil[1]{Institute of Mathematics, University of Augsburg, Augsburg, Germany}
\affil[2]{Institute of Business Analytics and Technology Transformation/JKU Business School, Johannes Kepler University Linz, Linz, Austria}			
\date{}
\maketitle

\else
\author[unia]{Elisabeth Gaar}
\ead{elisabeth.gaar@uni-a.de}
\author[jku]{Markus Sinnl}
\ead{markus.sinnl@jku.at}
\address[unia]{Institute of Mathematics, University of Augsburg, Augsburg, Germany}
\address[jku]{Institute of Business Analytics and Technology Transformation/JKU Business School, Johannes Kepler University Linz, Linz, Austria}
\fi

\begin{abstract}

Recently a series of papers introduced and investigated the maximal 
covering location problem with customer preference ordering, a variant of the classical maximal covering location problem (\MCLP).
In these papers, mixed-integer bilevel optimization models and single-level 
reformulations were presented for this problem, as well as various heuristics such 
as a GRASP, a Tabu search and a variable neighborhood search.

In this short note we show that instances of this new problem can actually be easily 
transformed into instances of the classical \MCLP and this transformation even 
reduces the size of the instance. Thus, existing algorithms for the classical 
\MCLP can be used to solve it. We provide a short computational study to show that this transformation leads to speed-ups of at least a magnitude when considering exact algorithms.

\begin{keyword}
maximal covering location problem; user preferences; mixed-integer 
programming;
\end{keyword}
\end{abstract}
\end{frontmatter}

\section{Main result}\label{sec:main}
The \emph{maximal covering location problem (\MCLP)}, originally introduced by 
\citet{church1974maximal} is a fundamental problem in location science. In this 
problem, we are given a set of customers with demands and a set of potential 
facility locations. Each 
facility location can cover a certain subset of customers. Moreover, we are given 
a cardinality constraint, i.e., a number $p$, and the goal is to open $p$ 
facilities in order to maximize the sum of the demands of the customers which are 
covered by the open facilities. 

There also exists a version of the \MCLP with a budget constraint instead of the cardinality constraint, where a budget $\mathcal B$ is given, and the sum of the costs of the opened facilities are not allowed to exceed $\mathcal B$. A formal definition of the this problem is given as follows.

\begin{definition}[Budget-constrained maximal covering location problem (\BMCLP)]
	Let $\mathcal I$ be the set of potential facility locations, and $\mathcal J$ 
	be the set of customers with demands $\mathcal D_j$, $j \in \mathcal J$. Let 
	$\mathcal 
	I(j) \subseteq \mathcal I$ represent the set of facility locations which can 
	cover customer $j$. For each facility location $i \in \mathcal I$, the cost $d_i$ of
	opening facility $i$ is given. Let $\mathcal B$ be the budget for 
	opening facilities at some of the locations from $\mathcal I$. The goal 
	is to open some of the facilities of $\mathcal I$ respecting the budget $\mathcal B$ in such a way that the sum of the demands of 
	customers covered by the opened facilities is maximized.
\end{definition}

It is easy to see that the \MCLP can be obtained as a special case of the \BMCLP where the costs $d_i = 1$ for all $i \in \mathcal{I}$ and $\mathcal{B} = p$. Thus, we restrict ourselves to the study of the \BMCLP from now on.

Naturally, over the years many extensions and variants of the \MCLP and the \BMCLP have emerged, 
see e.g., Chapter~5 of \citet{laporte2019location} for more details on covering 
location problems. In this work, 
we focus on the 
so-called \emph{maximal covering location problem with customer
	preference ordering (\MCLPCP)}
which was recently introduced by \cite{diaz2017grasp} and also studied in 
\citet{mrkela2018vns} and \citet{casas2020bi} and which has a cardinality constraint. Moreover, a 
variant with a budget constraint was introduced 
in \citet{mrkela2022variable}. It is defined as follows.

\begin{definition}[Budget‑constrained
	maximal covering location problem with customer
	preference ordering (\BMCLPCP)]
Let $I_1$ be the set of potential locations to open facilities of the company 
entering
the market and $I_2$ be the set of existing facility locations belonging to 
competitors,
while $I = I_1 \cup I_2$. 
For each facility location $i \in I_1$, the cost $c_i$ of
opening this facility is given.

The set of customers is $J$.
A demand $D_j$ is assigned to each customer $j \in J$, while $I(j) \subseteq I$
represents the subset of facility locations that can cover customer $j$.

The preference of customer $j \in J$ toward a facility at $i \in I$ is given by
$g_{ij}$. If $g_{i_1 j} > g_{i_2 j}$ holds, it means that customer $j$ prefers to 
be
allocated to a facility on site $i_1$ over a facility on site $i_2$. We assume that the preferences of a customer $j$ induce a strict total order of the facility locations that cover $j$, i.e., $g_{i_1 j} \neq g_{i_2 j}$ for all $i_1 \neq i_2 \in I(j)$.
The company that plans to enter the market has a limited budget $B$ for opening
facilities at some of the locations from the set~$I_1$.	

The goal of the company entering the market is to open 
some  
facilities $S \subseteq I_1$ respecting the budget $B$ in such a way that the sum of the demands of 
customers allocated to $S$ is maximized, where a customer $j$ is 
only \emph{allocated} to an open facility $i$ from $S$ if $i\in I(j)$ (i.e., $j$ is covered by $i$),
$g_{ij}\geq g_{i'j}$ for 
all $i' \in S \cap I(j)$ (i.e., $i$ is the most preferred facility that covers $j$ within $S$) 
and $g_{ij}>g_{i'j}$ for 
all $i' \in I_2 \cap I(j)$ (i.e., $j$ is not covered by a more preferred facility $i'$ in $I_2$).
\end{definition}

Again, this budget-constraint version is a generalization of the cardinality-constraint version of the problem, the \MCLPCP. In the \MCLPCP a maximum cardinality $p$ of facilities to open is given instead of $c_{i}$ and~$B$, so it is a special case of the \BMCLPCP with $c_i = 1$ for all $i \in I$ and $B = p$.

In the works \citet{diaz2017grasp,mrkela2018vns,casas2020bi,mrkela2022variable} 
various exact and heuristic solution algorithms were proposed for   
the \MCLPCP and the \BMCLPCP. 
The proposed heuristics  
include a greedy randomized adaptive search procedure (GRASP), a Tabu search and a 
variable neighborhood search (VNS). Instances 
with up to 1000 potential facility 
locations and 9000 customers are tackled with these heuristics.

The exact approaches in the above mentioned works are based on 
mixed-integer programming (MIP), i.e., various MIP formulations for the \BMCLPCP 
were presented, including bilevel MIPs. The computationally most efficient 
formulation uses a variant of the so-called \emph{closest assignment constraints} 
(CAC), see, e.g., \cite{espejo2012closest} for a general discussion of these constraints, 
and is based 
on a non-standard way of formulating the \MCLP using assignment variables and 
facility opening variables: Let binary variables $y_i$, $i \in I$ indicate if facility $i$ is opened in a solution and binary variables $x_{ij}$ indicate that customer $j \in J$ is allocated to facility $i \in I(j)$. Let $J(i)=\{j \in J: i \in I(j)\}$, i.e., the set of all customers which can be covered by a facility $i \in I$. Let $J_2=\{j \in J: I(j)\cap I_2 \neq \emptyset \}$ be the set of customers that are covered by an already existing facility and let $J_1=J \setminus J_2$. Moreover, let $i_{j,1}, \dots, i_{j,|I(j)|} \in I(j)$ be such that $I(j) = \{i_{j,1}, i_{j,2}, \dots, i_{j,|I(j)|} \}$ and such that $g_{i_{j,1},j} > g_{i_{j,2},j} > \dots > g_{i_{j,|I(j)|},j}$, i.e., such that $i_{j,\cdot}$ induce an ordering of $I(j)$ according to preferences.
Then the formulation (see \citet{diaz2017grasp,mrkela2022variable}) reads as 
{\allowdisplaybreaks
\begin{subequations}
\label{f1}
\begin{align}
\max\ & \sum_{i \in I_1} \sum_{j \in J(i)} D_j x_{ij} \label{f1:start}\\
\text{s.t.}\quad
& y_i = 1 && \forall i \in I_2 \label{f1:comp}\\
& \sum_{i \in I_1} c_i y_i \leq B && \label{f1:budget} \\
& \sum_{i \in I(j)} x_{ij} \le 1 && \forall j \in J_1 \label{f1:cov1} \\
& \sum_{i \in I(j)} x_{ij} = 1 && \forall j \in J_2  \label{f1:cov2}  \\
& x_{ij} \le y_i && \forall i \in I,\ \forall j \in J(i) \label{f1:link} \\
& \sum_{s=k+1}^{|I(j)|} x_{\,i_{j,s},\,j} + y_{\,i_{j,k}} \le 1 && \forall j \in J,\ \forall k \in \{ 1,\dots, |I(j)|-1 \} \label{f1:cac}\\
& x_{ij} \in \{0,1\} && \forall i \in I,\ \forall j \in J(i)  \\
& y_i \in \{0,1\} && \forall i \in I. \label{f1:end}
\end{align}
\end{subequations}
}

The objective function \eqref{f1:start} maximizes the sum of the demands of the customers covered by facilities of the company entering the market. Constraints \eqref{f1:comp} ensure that all existing facilities belonging to competitors are open. Constraint \eqref{f1:budget} represents the budget-constraint of the company entering the market. Constraints \eqref{f1:cov1} and \eqref{f1:cov2} make sure that each customer is covered at most once. Note that \eqref{f1:cov2} can be written with equality, because by definition of $J_2$ the customers appearing in \eqref{f1:cov2} are covered by a facility in $I_2$ and these facilities are open. The linking constraints \eqref{f1:link} ensure that customers can only be covered by an open facility. Finally, the CAC \eqref{f1:cac} deal with the customer preference ordering, i.e., if facility $i_{j,k}$ is opened, customer $j$ cannot be assigned to any facility which is less preferred than $i_{j,k}$ by $j$.

Clearly this formulation has $O(|I||J|)$ variables and 
constraints. Using it, \BMCLPCP-instances with up to $|I| = 150$ potential facility 
locations and $|J| = 1300$
customers could be solved to optimality in \citet{mrkela2022variable} within a time limit of one hour. 

In contrast to this formulation for the \BMCLPCP, 
the standard MIP-formulation of the \BMCLP uses facility 
opening variables (i.e., $y_i$ as above) and customer variables (i.e., binary variables $z_j$, which are one if and only if a customer $j$ is covered in a solution), and reads as follows (see, e.g., \citet{church1974maximal,cordeau2019benders}):
\begin{subequations}
\label{f2}
\begin{align}
\max \quad     & \sum_{j \in \mathcal J} \mathcal D_j \, z_j \label{f2:start}\\
\text{s.t.}\quad
& z_j \;\le\; \sum_{i \in \mathcal I(j)} y_i \quad && \forall j \in \mathcal J \\
& \sum_{i \in \mathcal I} d_i \, y_i \;\le\; \mathcal B \\
& y_i \in \{0,1\} \quad && \forall i \in \mathcal I \\
& z_j \in \{0,1\} \quad && \forall j \in \mathcal J. \label{f2:end}
\end{align}
\end{subequations}
This formulation has $O(|\mathcal I|+|\mathcal J|)$ variables and 
$O(|\mathcal I|)$ constraints. Moreover, for  
the \BMCLP there also exist 
formulations based on 
Benders decomposition (see, e.g., \citet{cordeau2019benders,guney2021large}), 
which allow the 
exact solution of the \BMCLP for instances with 1000s of potential facility 
locations and millions of 
customers. Moreover, there are countless heuristics for the problem in the literature, see, e.g., 
\cite{maximo2017intelligent} and the references therein. Thus, it could be extremely
beneficial from a 
computational point-of-view to be 
able to re-use existing approaches and formulations for the \BMCLP also for the 
\BMCLPCP.

As a step toward this direction, we are now ready to state our main result.

\begin{theorem}\label{thm:main}
The optimal solution of an instance of the \BMCLPCP can be found by solving an instance of the \BMCLP. Consequently, also the optimal solution of an instance of the \MCLPCP can be obtained by solving an instance of the \MCLP.
\end{theorem}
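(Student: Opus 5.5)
The plan is to give an explicit transformation of a \BMCLPCP instance into a \BMCLP instance and show that the feasible sets coincide and the objective values agree. The key observation is that the competitor facilities in $I_2$ are always open. So for each customer $j$, the only facilities of $I_1$ that can ever receive $j$ are those strictly preferred by $j$ to every competitor facility covering $j$. Among the company's own facilities, the preference order only decides \emph{which} open facility receives $j$, not \emph{whether} $j$ is allocated to $S$. Since the objective counts only demand allocated to $S$, and not to which facility in $S$, preferences among $I_1$ facilities are irrelevant.

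Concretely, for $j \in J$ define $I'(j) = \{ i \in I(j) \cap I_1 : g_{ij} > g_{i'j} \text{ for all } i' \in I_2 \cap I(j)\}$. For $j\in J_1$ this is simply $I(j)$ (note $I(j) \subseteq I_1$ there). For $j \in J_2$ it is the set of $i \in I(j)\cap I_1$ ranked above the best competitor $i_{j,k^*}$, where $k^*$ is the smallest index with $i_{j,k^*}\in I_2$. The \BMCLP instance then has $\mathcal I = I_1$, $d_i = c_i$, $\mathcal B = B$, and $\mathcal J = \{ j \in J : I'(j) \neq \emptyset\}$ with $\mathcal D_j = D_j$ and $\mathcal I(j) = I'(j)$. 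Discarding customers with empty $I'(j)$ and all competitor facilities is what makes the instance smaller.

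The core step is the claim that for every $S \subseteq I_1$, customer $j$ is allocated to $S$ in the \BMCLPCP sense if and only if $S \cap I'(j) \neq \emptyset$.
\begin{itemize}
\item[($\Rightarrow$)] If $j$ is allocated to $i\in S$, then $i \in I(j)$ and $g_{ij} > g_{i'j}$ for all $i'\in I_2\cap I(j)$. Hence $i \in I'(j)\cap S$.
\item[($\Leftarrow$)] If $S\cap I'(j)\neq\emptyset$, let $i$ be the element of $S\cap I(j)$ with maximal $g_{ij}$. It exists and is unique by the strict total order assumption. Then $g_{ij}\ge g_{i'j}$ for all $i'\in S\cap I(j)$. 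Moreover, $g_{ij}$ is at least the value of some element of $I'(j)$, which strictly exceeds every competitor's value. So $j$ is allocated to $i$.
\end{itemize}
Allocation is to at most one facility, again by strictness, so no demand is double counted. Since the budget constraints are identical, the two problems have the same feasible sets $S$ and the same objective value $\sum_{j : S\cap I'(j)\neq\emptyset} D_j$. Any optimal \BMCLP solution is therefore optimal for the \BMCLPCP.

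The second sentence of the theorem follows by specializing to $c_i=1$ and $B=p$, which the transformation maps to $d_i=1$ and $\mathcal B=p$, i.e.\ an \MCLP instance.

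The main obstacle I expect is only the careful handling of the allocation definition in the ($\Leftarrow$) direction: one must argue that the most preferred open company facility beats all competitors even though it may not be the specific element of $I'(j)$ we started with. Strictness of the preference order makes this go through without ties.
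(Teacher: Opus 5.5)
Your proposal is correct and takes essentially the same approach as the paper. It uses the same reduction ($\mathcal I = I_1$, identical costs and budget, and $\mathcal I(j)$ restricted to own facilities in $I(j)$ strictly preferred to every competitor covering $j$) and the same two-direction argument that, for every $S \subseteq I_1$, the customers allocated to $S$ in the \BMCLPCP instance are exactly those covered by $S$ in the \BMCLP instance. The differences are cosmetic: you drop customers with $I'(j)=\emptyset$ where the paper keeps $\mathcal J = J$, your ``for all $i' \in I_2 \cap I(j)$'' phrasing avoids the paper's maximum over a possibly empty set, and you state explicitly that unit costs and budget $p$ are preserved, so the resulting instance is indeed an \MCLP instance.
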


\begin{proof}
Given an instance $P=(I_1,I_2,J,D_j,I(j),c_i,g_{ij},B)$ of the 
\BMCLPCP, we define an instance of the \BMCLP $\mathcal{P}=(\mathcal I,\mathcal J, \mathcal D_j,\mathcal I(j), d_i,\mathcal B)$  as follows: Set 
$\mathcal I=I_1$, 
$\mathcal J=J$, $\mathcal D_j=D_j$, $d_i = c_i$ and $\mathcal B=B$. Furthermore, for all customers $j \in \mathcal J$, 
define 
$\mathcal 
I(j)=\{i \in I_1 \cap I(j):g_{ij} > \max_{i' \in  I_2 \cap I(j) } g_{i'j}\}$. 

Let $S^* \subseteq I_1$. Then $S^*$ can be seen as a set of opened facilities and therefore represents a solution of both~$P$ and $\mathcal{P}$. Clearly $S^*$ is feasible for $P$ if and only if $S^*$ is feasible for $\mathcal P$ trivially holds, as $\mathcal I=I_1$, $\mathcal B=B$ and $d_i=c_i$ for all $i \in \mathcal I$. 
We next show that the objective function values of the solution $S^*$ for $P$ and for $\mathcal{P}$ coincide. 
To do so, we observe that it is enough to prove that the customers allocated to the opened facilities in $S^*$ in $P$ are exactly the same as the customers covered by the opened facilities in $S^*$ in $\mathcal{P}$.

For the first direction, let $J(S^*)$ be the set of customers allocated to the opened facilities in $S^*$ in $P$. 
Thus, for each $j \in J(S^*)$ it must hold that there exists an open facility
$i^*(j) \in S^*$ such that  $i^*(j) \in I_1 \cap I(j)$ and $g_{i^*(j)j} > 
\max_{i' \in I_2 \cap I(j) } g_{i'j}$, as otherwise $j$ would not be 
allocated (as a facility in $I_2$ which can also cover it would be preferred). 
This condition is exactly the 
definition of $\mathcal I(j)$, thus $S^*$ also covers $j$ in $\mathcal{P}$.

For the other direction, let $\mathcal J( S^*)$ be the set of customers 
covered 
by the opened facilities in $ S^*$ in $\mathcal{P}$. By the definition of $\mathcal I(j)$, for each $j \in \mathcal 
J( S^*)$ it must hold that there exists 
an $i^*(j) \in  S^*$ such that  $i^*(j) \in I_1 \cap I(j) $ and 
$g_{i^*(j)j} > 
\max_{i' \in  I_2 \cap I(j) } g_{i'j}$. 
Let $I^*(j)$ be the set of all the potential choices for $i^*(j)$ in such a way.
Let $i^{**}(j) \in I^*(j)$ be the one choice for which $g_{i^{**}(j)j}\geq g_{i^*(j)j}$ holds
for all $i^*(j) \in I^*(j)$.    
Thus, this $i^{**}(j)$ also ensures 
that $j$ is allocated to $S^*$ in $P$, as there is no facility in both $I_1$ and $I_2$ which 
can also cover $j$ and would be preferred to $i^{**}(j)$.

As a consequence, the customers allocated to $S^*$ in $P$ are exactly the ones covered by $S^*$ in $\mathcal{P}$. Thus, the optimal objective function values of solutions $S^*$ for $P$ and $\mathcal{P}$ coincide. This shows that any solution is optimal for $P$ if and only if it is optimal for $\mathcal{P}$, which finishes the proof for the \BMCLPCP. The result for the \MCLPCP follows from the fact that it is a special case of the \BMCLPCP.
\end{proof}

As a consequence of Theorem~\ref{thm:main}, existing algorithms for the 
\BMCLP can be used to solve the \BMCLPCP. We note that the instance obtained for 
the \BMCLP as described in the proof of Theorem~\ref{thm:main} is smaller than the instance of the \BMCLPCP, as the set $I_2$ is not 
needed and also $|\mathcal I(j)|\leq |I(j)|$ holds for all $j \in \mathcal J=J$. 
In particular, the idea behind our transformation of an instance of the \BMCLPCP into an instance of the \BMCLP is that a customer will be allocated to a newly open facility in $I_1$, if and only if there is no more preferred facility of the competitor in $I_2$, since all existing facilities in $I_2$ are open for sure. Thus, the decision about the allocation in the \BMCLPCP can be modeled as the decision of the covering in the \BMCLP. 

Note that the \BMCLP does not explicitly consider and thus does also not provide allocation. However, it is easy to see that for any set of open facilities, including an optimal solution, an allocation to the most preferred open facility for any customer can be trivially obtained by inspection.

Thus, by Theorem~\ref{thm:main} any instance of the \BMCLPCP can be solved by solving a smaller instance of the \BMCLP. In the next section, we provide a short computational study to illustrate the benefits of this transformation.

\section{Computational study}
In our computational study, we compare the performance obtained when solving \BMCLPCP-instances directly via the best formulation from literature, namely~\eqref{f1}, against transforming the \BMCLPCP-instances into \BMCLP-instances via Theorem \ref{thm:main} and then solving the resulting \BMCLP-instances using formulation~\eqref{f2}. The implementation was done in C++ using CPLEX 20.1 as MIP solver. All settings of CPLEX were left on their default values. The computations were performed on a single core of an AMD Ryzen 5 2600 with~16~GB of RAM, and the time limit for each run was set to 300~seconds.

\subsection{Instances}
As the original instances from the computational studies in \citet{diaz2017grasp,mrkela2022variable} were not available, we generated instances following the description in \citet{mrkela2022variable} (which is similar to the one in \citet{diaz2017grasp} and just adds the budget constraint): In a first step $|I|+|J|$ points are generated in the unit square uniformly at random, and $|I|$ of them are taken as potential facility location, while the remaining ones are the customers. The set $|I_2|$ is determined by randomly taking~$10\%$ of~$I$. To obtain the sets $I(j)$ for $j \in J$, Euclidean distances between each $i \in I$ and $j \in J$ are computed, and a fixed coverage radius $R$ is used to determine the set, i.e., if the distance between some $i$ and $j$ is at most~$R$, then $i \in I(j)$. For each $i \in I$, $c_i$ is a random value from the interval $[1000,1500]$.
As budget, the value of $B=1000p$ is used, where the value of $p$ varies over the instances.
The user preferences are generated following the procedure introduced by \citet{canovas2007strengthened} in a work about the facility location problem with user preferences. As none of \citet{diaz2017grasp,mrkela2022variable} described how $D_j$ was determined, we set it to one for each customer. We note that in both formulations, the demands $D_j$ appear only as coefficients in the objective function, thus the effect of different values of $D_j$ than one on the computational performance should be the same for both formulations. 
Following \citet{mrkela2022variable}, we generated 60 instances, in particular ten instances for each of the following combinations of $(|I|,|J|,R,p)$: $(25,225,0.8,3)$, $(50,450,0.7,6)$, $(75,675,0.5,10)$, $(100,900,0.3,13)$, $(150,1300,0.25,20)$, $(200,1800,0.2,27)$.

\subsection{Results}

Tables \ref{tab:comparison1} and \ref{tab:comparison2} show detailed results of the computations, giving for each instance and solution approach the value of the best found solution (column $best$), the upper bound (column $UB$), the runtime in seconds (column $t (s)$, where TL indicates that the time limit of 300 seconds was reached). 
The runtimes show that for all instances, the transformation-approach is much faster. 
In particular, for all instances that could be solved within the time limit, the average speed-up factor is about 22.
For the instances with $|I|=150$ potential facility locations, none of them could be solved within the time limit using \eqref{f1}, while using the transformation all instances could be solved.

\begin{table}[h!tb]
\small
\centering
\caption{Comparison of results for smaller instances}
\label{tab:comparison1}
\begin{tabular}{rrrrrrrrr}
\toprule
  &  &  & \multicolumn{3}{c}{\eqref{f1}} & \multicolumn{3}{c}{\eqref{f2}} \\
\cmidrule(lr){4-6} \cmidrule(lr){7-9}
$|I|$ & $|J|$ & ID & $best$ & $UB$ & $t (s)$ & $best$ & $UB$ & $t (s)$ \\
\midrule
25 & 225 & 1 & 108 & 108 & 1.0 & 108 & 108 & \textbf{0.1} \\
25 & 225 & 2 & 109 & 109 & 0.3 & 109 & 109 & \textbf{0.0} \\
25 & 225 & 3 & 117 & 117 & 2.0 & 117 & 117 & \textbf{0.1} \\
25 & 225 & 4 & 103 & 103 & 1.6 & 103 & 103 & \textbf{0.1} \\
25 & 225 & 5 & 114 & 114 & 0.3 & 114 & 114 & \textbf{0.0} \\
25 & 225 & 6 & 132 & 132 & 2.6 & 132 & 132 & \textbf{0.1} \\
25 & 225 & 7 & 118 & 118 & 0.6 & 118 & 118 & \textbf{0.0} \\
25 & 225 & 8 & 135 & 135 & 1.9 & 135 & 135 & \textbf{0.0} \\
25 & 225 & 9 & 138 & 138 & 1.0 & 138 & 138 & \textbf{0.0} \\
25 & 225 & 10 & 115 & 115 & 1.6 & 115 & 115 & \textbf{0.1} \\ \hline
50 & 450 & 1 & 256 & 256 & 5.2 & 256 & 256 & \textbf{0.3} \\
50 & 450 & 2 & 292 & 292 & 5.8 & 292 & 292 & \textbf{0.2} \\
50 & 450 & 3 & 260 & 260 & 6.3 & 260 & 260 & \textbf{0.3} \\
50 & 450 & 4 & 273 & 273 & 7.0 & 273 & 273 & \textbf{0.3} \\
50 & 450 & 5 & 270 & 270 & 6.2 & 270 & 270 & \textbf{0.2} \\
50 & 450 & 6 & 266 & 266 & 6.0 & 266 & 266 & \textbf{0.3} \\
50 & 450 & 7 & 289 & 289 & 5.1 & 289 & 289 & \textbf{0.2} \\
50 & 450 & 8 & 266 & 266 & 4.8 & 266 & 266 & \textbf{0.2} \\
50 & 450 & 9 & 235 & 235 & 4.4 & 235 & 235 & \textbf{0.2} \\ 
50 & 450 & 10 & 311 & 311 & 10.7 & 311 & 311 & \textbf{0.3} \\ \hline
75 & 675 & 1 & 449 & 449 & 22.4 & 449 & 449 & \textbf{0.8} \\
75 & 675 & 2 & 431 & 431 & 38.5 & 431 & 431 & \textbf{1.5} \\
75 & 675 & 3 & 433 & 433 & 26.1 & 433 & 433 & \textbf{1.4} \\
75 & 675 & 4 & 403 & 403 & 32.3 & 403 & 403 & \textbf{2.1} \\
75 & 675 & 5 & 428 & 428 & 22.1 & 428 & 428 & \textbf{1.2} \\
75 & 675 & 6 & 405 & 405 & 17.8 & 405 & 405 & \textbf{0.9} \\
75 & 675 & 7 & 404 & 404 & 30.5 & 404 & 404 & \textbf{1.6} \\
75 & 675 & 8 & 370 & 370 & 16.7 & 370 & 370 & \textbf{1.4} \\
75 & 675 & 9 & 397 & 397 & 19.8 & 397 & 397 & \textbf{1.3} \\
75 & 675 & 10 & 412 & 412 & 30.0 & 412 & 412 & \textbf{1.4} \\
\bottomrule
\end{tabular}
\end{table}

\begin{table}[h!tb]
\small
\centering
\caption{Comparison of results for larger instances}
\label{tab:comparison2}
\begin{tabular}{rrrrrrrrrr}
\toprule
  &  &  & \multicolumn{3}{c}{\eqref{f1}} & \multicolumn{3}{c}{\eqref{f2}} \\
\cmidrule(lr){4-6} \cmidrule(lr){7-9}
$|I|$ & $|J|$ & ID & $best$ & $UB$ & $t (s)$ & $best$ & $UB$ & $t (s)$ \\
\midrule
100 & 900 & 1 & 519 & 519 & 108.2 & 519 & 519 & \textbf{9.0} \\
100 & 900 & 2 & 548 & 548 & 50.7 & 548 & 548 & \textbf{2.5} \\
100 & 900 & 3 & 580 & 586 & TL & 581 & 581 & \textbf{5.8} \\
100 & 900 & 4 & 575 & 575 & 129.0 & 575 & 575 & \textbf{3.1} \\
100 & 900 & 5 & 572 & 572 & 84.0 & 572 & 572 & \textbf{4.3} \\
100 & 900 & 6 & 585 & 585 & 84.0 & 585 & 585 & \textbf{3.3} \\
100 & 900 & 7 & 549 & 549 & 60.5 & 549 & 549 & \textbf{3.3} \\
100 & 900 & 8 & 514 & 514 & 162.9 & 514 & 514 & \textbf{10.1} \\
100 & 900 & 9 & 566 & 566 & 51.4 & 566 & 566 & \textbf{1.0} \\
100 & 900 & 10 & 537 & 537 & 269.0 & 537 & 537 & \textbf{11.8} \\ \hline
150 & 1300 & 1 & 819 & 847 & TL & 828 & 828 & \textbf{66.0} \\
150 & 1300 & 2 & 891 & 913 & TL & 896 & 896 & \textbf{54.9} \\
150 & 1300 & 3 & 881 & 893 & TL & 883 & 883 & \textbf{20.3} \\
150 & 1300 & 4 & 828 & 850 & TL & 832 & 832 & \textbf{61.3} \\
150 & 1300 & 5 & 828 & 862 & TL & 840 & 840 & \textbf{69.0} \\
150 & 1300 & 6 & 810 & 846 & TL & 821 & 821 & \textbf{225.2} \\
150 & 1300 & 7 & 840 & 883 & TL & 855 & 855 & \textbf{169.7} \\
150 & 1300 & 8 & 813 & 842 & TL & 822 & 822 & \textbf{91.2} \\
150 & 1300 & 9 & 822 & 857 & TL & 834 & 834 & \textbf{119.6} \\
150 & 1300 & 10 & 786 & 812 & TL & 795 & 795 & \textbf{74.3} \\ \hline
200 & 1800 & 1 & 1174 & 1220 & TL & \textbf{1190} & \textbf{1200} & TL \\
200 & 1800 & 2 & 1144 & 1193 & TL & 1165 & 1165 & \textbf{220.3} \\
200 & 1800 & 3 & 1143 & 1190 & TL & \textbf{1160} & \textbf{1168} & TL \\
200 & 1800 & 4 & 1217 & 1258 & TL & 1231 & 1231 & \textbf{279.6} \\
200 & 1800 & 5 & 1158 & 1204 & TL & \textbf{1168} & \textbf{1184} & TL \\
200 & 1800 & 6 & 1121 & 1179 & TL & \textbf{1143} & \textbf{1159} & TL \\
200 & 1800 & 7 & 1122 & 1168 & TL & \textbf{1128} & \textbf{1152} & TL \\
200 & 1800 & 8 & 1177 & 1239 & TL & \textbf{1203} & \textbf{1215} & TL \\
200 & 1800 & 9 & 1162 & 1212 & TL & \textbf{1180} & \textbf{1195} & TL \\
200 & 1800 & 10 & 1142 & 1177 & TL & \textbf{1149} & \textbf{1152} & TL \\
\bottomrule
\end{tabular}
\end{table}

For a better visual comparison, Figures \ref{fig:run} and \ref{fig:gap} show a plot of the runtime and the optimality gap of the computations of Tables~\ref{tab:comparison1} and~\ref{tab:comparison2}, respectively. This optimality gap is calculated as $(UB-best)/best \cdot 100$. 

The runtime plot confirms that our transformation-approach is much faster and allows to solve more instances. Furthermore, the optimality gap plot shows that it also produces significantly better bounds for instances that were not solved to optimality within the time limit.

\begin{figure}[h!tb]
	\centering
	\begin{subfigure}{0.45\textwidth}
		\includegraphics[width=\linewidth]{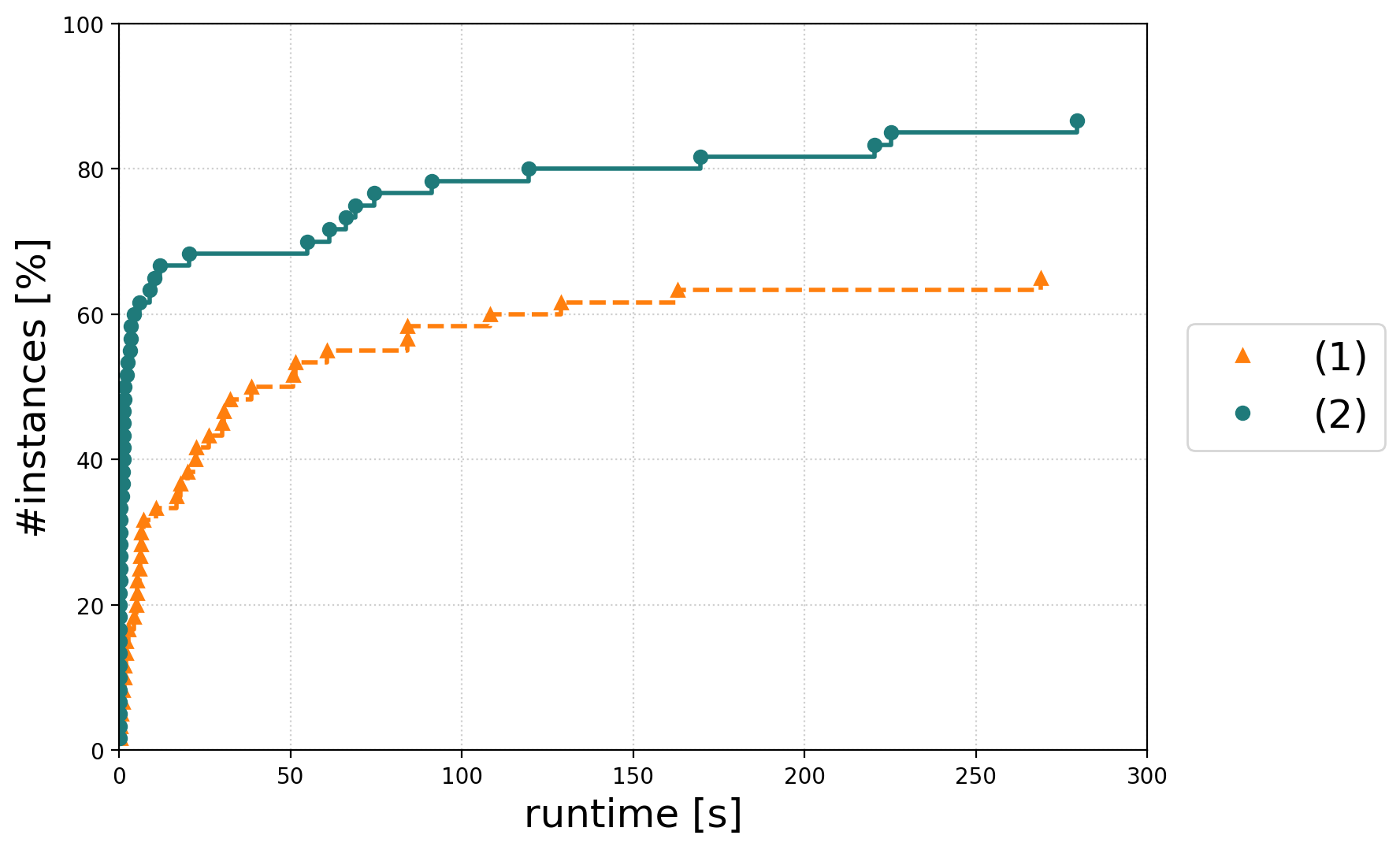}
		\caption{runtime}
		\label{fig:run}
	\end{subfigure}
	\begin{subfigure}{0.45\textwidth}
		\centering
		\includegraphics[width=\linewidth]{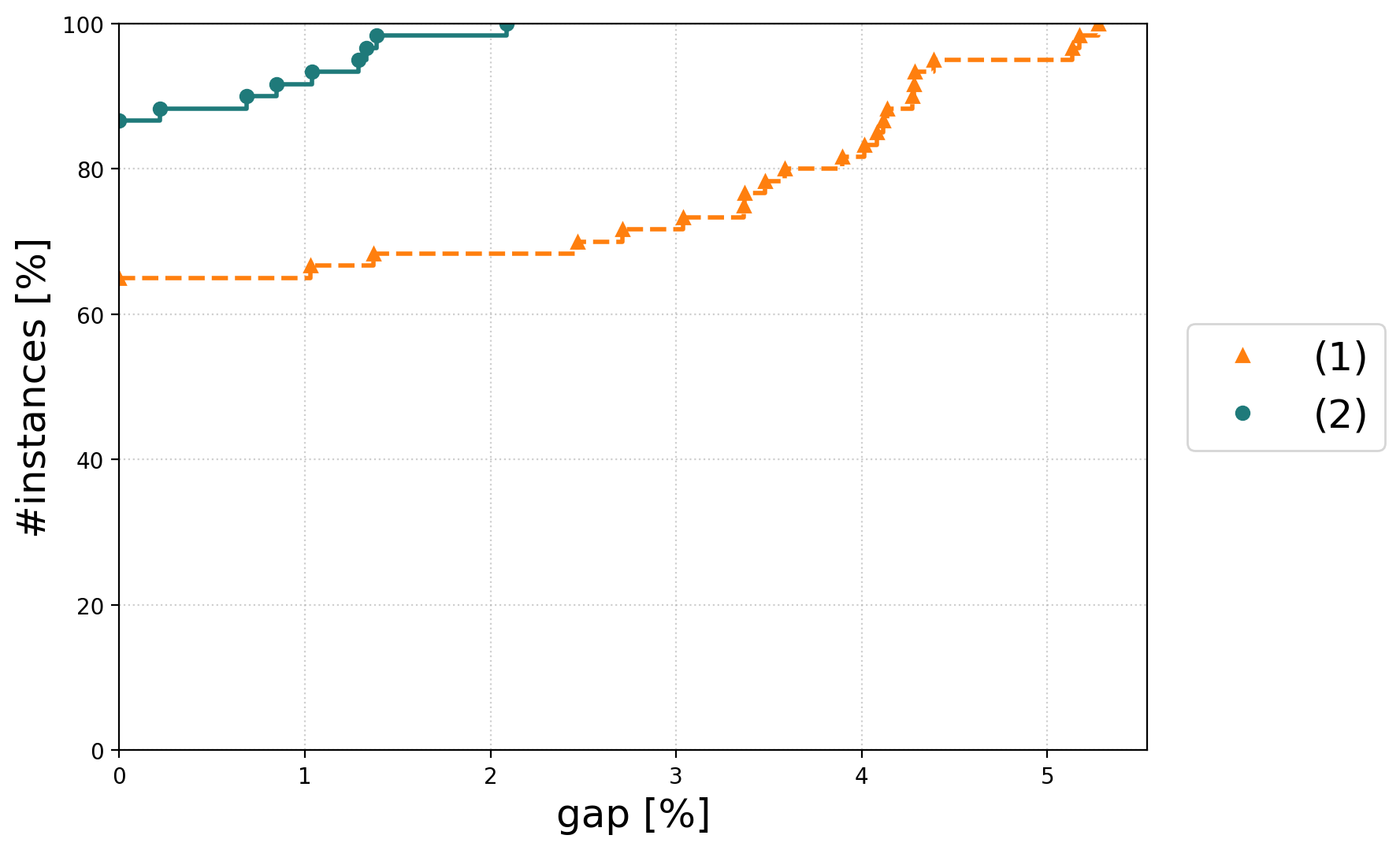}
		\caption{optimality gap}
		\label{fig:gap}
	\end{subfigure}
	\caption{Comparison of the formulations}
	\label{fig:plot}
\end{figure}

\section{Conclusions}
As our computational study demonstrated, to solve \BMCLPCP-instances it seems to be a good approach to simply transform the instances to \BMCLP-instances instead of developing specific algorithms for the \BMCLPCP. In particular, all approaches tailored for the \BMCLPCP need to be evaluated against those for the transformed instances utilizing the rich literature on the \BMCLP.
As a byproduct, our transformation reveals that the apparent complexity of the \BMCLPCP stems not from intrinsic difficulty, but from its representation.

\bibliographystyle{elsarticle-harv}
\bibliography{biblio}

\end{document}